\documentclass{amsart}
\usepackage{amssymb}
\usepackage{amsfonts}

\setcounter{MaxMatrixCols}{10}

\newtheorem{theorem}{Theorem}
\theoremstyle{plain}

\newtheorem{corollary}{Corollary}

\newtheorem{definition}{Definition}

\newtheorem{lemma}{Lemma}

\newtheorem{proposition}{Proposition}

\numberwithin{equation}{section}
\input{tcilatex}

\begin{document}
\title[Inequalities for Godunova-Levin Class Functions]{Some integral
inequalities for Godunova-Levin Class Functions}
\author{M.Emin Ozdemir$^{\blacklozenge }$}
\address{$^{\blacklozenge }$ATATURK UNIVERSITY, K.K. EDUCATION FACULTY,
DEPARTMENT OF MATHEMATICS, 25240 CAMPUS, ERZURUM, TURKEY}
\email{emos@atauni.edu.tr}
\author{Merve Avci$^{\ast ,\diamondsuit }$}
\thanks{$^{\diamondsuit }$Corresponding Author}
\address{$^{\ast }$ADIYAMAN UNIVERSITY, FACULTY OF SCIENCE AND ART,
DEPARTMENT OF MATHEMATICS, 02040, ADIYAMAN, TURKEY}
\email{mavci@posta.adiyama.edu.tr}
\subjclass[2000]{Primary 05C38, 15A15; Secondary 05A15, 15A18}
\keywords{Godunova-Levin Class Functions, Power-mean integral inequality}

\begin{abstract}
In this paper, we obtain some new inequalities for functions which are
introduced by Godunova and Levin.
\end{abstract}

\maketitle

\section{introduction}

Following inequalities are well known in the literature as Hermite-Hadamard
inequality and Simpson inequality respectively:

\begin{theorem}
\label{teo 1.1} Let $f:I\subseteq 
\mathbb{R}
\rightarrow 
\mathbb{R}
$be a convex function on the interval $I$ of real numbers and $a,b\in I$
with $a<b.$Then, the following double inequality holds 
\begin{equation*}
f\left( \frac{a+b}{2}\right) \leq \frac{1}{b-a}\int_{a}^{b}f(x)dx\leq \frac{%
f(a)+f(b)}{2}.
\end{equation*}
\end{theorem}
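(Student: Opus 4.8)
The plan is to establish the two inequalities separately, in both cases exploiting only the definition of convexity together with a symmetric change of variables on $[a,b]$.

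For the left-hand inequality, I would parametrize the interval by writing, for $t\in[0,1]$, the two symmetric points $x_t = ta+(1-t)b$ and $y_t = (1-t)a+tb$, and note that their average equals $\frac{a+b}{2}$ independently of $t$. Convexity then gives $f\!\left(\frac{a+b}{2}\right) = f\!\left(\frac{x_t+y_t}{2}\right) \le \frac{1}{2}\big(f(x_t)+f(y_t)\big)$ for every $t$. Integrating this in $t$ over $[0,1]$ and performing the linear substitution $u=x_t$ (respectively $u=y_t$), each of which turns $\int_0^1 f\,dt$ into $\frac{1}{b-a}\int_a^b f(u)\,du$, yields $f\!\left(\frac{a+b}{2}\right)\le \frac{1}{b-a}\int_a^b f(u)\,du$. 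An equivalent route is to use a supporting line of $f$ at the midpoint and integrate, the linear part cancelling by symmetry of $[a,b]$ about $\frac{a+b}{2}$.

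For the right-hand inequality, I would observe that any $x\in[a,b]$ is the convex combination $x = \lambda a + (1-\lambda) b$ with $\lambda = \frac{b-x}{b-a}\in[0,1]$, so convexity gives $f(x)\le \frac{b-x}{b-a}f(a) + \frac{x-a}{b-a}f(b)$. Integrating over $[a,b]$ and using $\int_a^b (b-x)\,dx = \int_a^b (x-a)\,dx = \frac{(b-a)^2}{2}$ gives $\int_a^b f(x)\,dx \le \frac{b-a}{2}\big(f(a)+f(b)\big)$, i.e. the claimed bound after dividing by $b-a$.

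The only technical point to verify is that $\int_a^b f$ is meaningful: this is automatic, since a convex function on $I$ is continuous on the interior of $I$ and bounded near $a$ and $b$, hence Riemann-integrable on $[a,b]$. I expect this integrability remark to be the only genuine (and minor) obstacle; the remainder is a direct combination of the convexity inequality with elementary integration.
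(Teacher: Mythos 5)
Your proof is correct. Note that the paper does not prove Theorem \ref{teo 1.1} at all: it is quoted in the introduction as the classical Hermite--Hadamard inequality, so there is nothing to compare against. Your argument is the standard one --- the symmetric pairing $x_t=ta+(1-t)b$, $y_t=(1-t)a+tb$ with midpoint $\frac{a+b}{2}$ for the left inequality, and the convex-combination bound $f(x)\le\frac{b-x}{b-a}f(a)+\frac{x-a}{b-a}f(b)$ integrated over $[a,b]$ for the right one --- and both substitutions and the integrability remark check out.
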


\begin{theorem}
\label{teo 1.2} Let $f:\left[ a,b\right] \rightarrow 
\mathbb{R}
$ be a four times continuously differentiable mapping on $\left( a,b\right) $
and $\left\Vert f^{(4)}\right\Vert _{\infty }=\sup\limits_{x\in \left(
a,b\right) }\left\vert f^{(4)}\left( x\right) \right\vert <\infty .$ Then,
the following inequality holds:%
\begin{equation*}
\left\vert \frac{1}{3}\left[ \frac{f(a)+f\left( b\right) }{2}+2f\left( \frac{%
a+b}{2}\right) \right] -\frac{1}{b-a}\dint\limits_{a}^{b}f(x)dx\right\vert
\leq \frac{1}{2880}\left\Vert f^{(4)}\right\Vert _{\infty }\left( b-a\right)
^{4}.
\end{equation*}
\end{theorem}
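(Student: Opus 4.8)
The plan is to establish the equivalent, unnormalised form of the inequality: writing $m=\frac{a+b}{2}$ and $h=\frac{b-a}{2}$, it suffices to show
\begin{equation*}
\left\vert \int_{a}^{b}f(x)\,dx-\frac{b-a}{6}\left[ f(a)+4f(m)+f(b)\right] \right\vert \leq \frac{(b-a)^{5}}{2880}\left\Vert f^{(4)}\right\Vert _{\infty },
\end{equation*}
because dividing by $b-a$ and noting that $\frac{1}{6}\left[ f(a)+4f(m)+f(b)\right] =\frac{1}{3}\left[ \frac{f(a)+f(b)}{2}+2f(m)\right] $ returns the stated inequality.

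First I would fix $m$ and introduce the auxiliary function
\begin{equation*}
F(t)=\int_{m-t}^{m+t}f(x)\,dx-\frac{t}{3}\left[ f(m-t)+4f(m)+f(m+t)\right] ,\qquad 0\leq t\leq h,
\end{equation*}
so that the quantity to be bounded is precisely $\left\vert F(h)\right\vert $. The symmetry about $m$ is what makes the low-order terms cancel: differentiating (by Leibniz's rule for the variable limits) and simplifying, one finds $F(0)=F^{\prime }(0)=F^{\prime \prime }(0)=0$ together with
\begin{equation*}
F^{\prime \prime \prime }(t)=-\frac{t}{3}\left[ f^{\prime \prime \prime }(m+t)-f^{\prime \prime \prime }(m-t)\right] .
\end{equation*}
Applying the mean value theorem to $f^{\prime \prime \prime }$ on $[m-t,m+t]$ gives $f^{\prime \prime \prime }(m+t)-f^{\prime \prime \prime }(m-t)=2t\,f^{(4)}(\xi )$ for some $\xi $, hence $\left\vert F^{\prime \prime \prime }(t)\right\vert \leq \frac{2t^{2}}{3}\left\Vert f^{(4)}\right\Vert _{\infty }$.

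The main step is then three successive integrations from $0$, each using that the next lower derivative vanishes at $t=0$: one gets $\left\vert F^{\prime \prime }(t)\right\vert \leq \frac{2t^{3}}{9}\left\Vert f^{(4)}\right\Vert _{\infty }$, then $\left\vert F^{\prime }(t)\right\vert \leq \frac{t^{4}}{18}\left\Vert f^{(4)}\right\Vert _{\infty }$, and finally $\left\vert F(t)\right\vert \leq \frac{t^{5}}{90}\left\Vert f^{(4)}\right\Vert _{\infty }$. Evaluating at $t=h=\frac{b-a}{2}$ and using $h^{5}=\frac{(b-a)^{5}}{32}$ yields $\left\vert F(h)\right\vert \leq \frac{(b-a)^{5}}{2880}\left\Vert f^{(4)}\right\Vert _{\infty }$; since $F(h)=\int_{a}^{b}f(x)\,dx-\frac{b-a}{6}\left[ f(a)+4f(m)+f(b)\right] $, this is exactly the reduced inequality, and the theorem follows.

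I expect the only real obstacle to be the bookkeeping in computing $F^{\prime },F^{\prime \prime },F^{\prime \prime \prime }$ and verifying the three vanishing conditions at $t=0$; the symmetric parametrisation about the midpoint is essential here, since expanding about the endpoint $a$ instead would leave nonzero first- and second-order terms and destroy the constant $\frac{1}{2880}$. An alternative route is to represent $F(h)$ as $\int_{a}^{b}K(x)f^{(4)}(x)\,dx$ with the explicit Peano kernel $K$ of Simpson's rule and estimate it by $\left\Vert f^{(4)}\right\Vert _{\infty }\int_{a}^{b}\left\vert K(x)\right\vert \,dx$, but locating the sign change of $K$ and computing $\int_{a}^{b}\left\vert K(x)\right\vert \,dx=\frac{(b-a)^{5}}{2880}$ is of comparable difficulty.
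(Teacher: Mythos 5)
Your proof is correct, but there is nothing in the paper to compare it against: Theorem \ref{teo 1.2} is quoted in the introduction as the classical Simpson inequality, a known background fact, and the authors give no proof of it (their own work begins with Lemma \ref{lem 1.1} and Theorem \ref{teo 2.1}). Your argument is the standard symmetric-kernel-free proof, and the computations check out: with $F(t)=\int_{m-t}^{m+t}f-\frac{t}{3}\left[ f(m-t)+4f(m)+f(m+t)\right] $ one indeed gets $F(0)=F^{\prime }(0)=F^{\prime \prime }(0)=0$ and $F^{\prime \prime \prime }(t)=-\frac{t}{3}\left[ f^{\prime \prime \prime }(m+t)-f^{\prime \prime \prime }(m-t)\right] $, the mean value theorem (legitimate since $f^{(4)}$ exists on the open interval) gives $\left\vert F^{\prime \prime \prime }(t)\right\vert \leq \frac{2t^{2}}{3}\left\Vert f^{(4)}\right\Vert _{\infty }$, and the three integrations produce $\frac{2t^{3}}{9}$, $\frac{t^{4}}{18}$, $\frac{t^{5}}{90}$ in turn, so that $t=h=\frac{b-a}{2}$ and $h^{5}=\frac{(b-a)^{5}}{32}$ yield exactly $\frac{(b-a)^{5}}{2880}$; dividing by $b-a$ and rewriting $\frac{1}{6}\left[ f(a)+4f(m)+f(b)\right] $ as $\frac{1}{3}\left[ \frac{f(a)+f(b)}{2}+2f(m)\right] $ recovers the stated form. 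The only point worth a word of care is that $f^{\prime }(m\pm t),\dots $ are evaluated inside $(a,b)$ only for $t<h$, so the derivative bounds hold on the open interval and the bound at $t=h$ follows by continuity of $F$; this is the usual, harmless technicality. Your remark that the Peano-kernel route $F(h)=\int_{a}^{b}K(x)f^{(4)}(x)\,dx$ with $\int_{a}^{b}\left\vert K\right\vert =\frac{(b-a)^{5}}{2880}$ is an equivalent alternative is also accurate.
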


In 1985, E. K. Godunova and V. I. Levin introduced the following class of
functions (see \cite{GL}):

\begin{definition}
\label{def 1.1} A map $f:I\rightarrow 
\mathbb{R}
$ is said to belong to the class $Q(I)$ if it is nonnegative and for all $%
x,y\in I$ and $\lambda \in \left( 0,1\right) ,$ satisfies the inequality%
\begin{equation*}
f(\lambda x+(1-\lambda )y)\leq \frac{f(x)}{\lambda }+\frac{f(y)}{1-\lambda }.
\end{equation*}
\end{definition}

In \cite{MK}, Moslehian and Kian obtained Hermite-Hadamard and Ostrowski
type inequalities for $Q(I)$ class functions.

In \cite{s}, Sarikaya and Aktan proved the following Lemma:

\begin{lemma}
\label{lem 1.1} Let $I\subset 
\mathbb{R}
$ be an open interval, $a,b\in I$ with $a<b.$ If $f:I\rightarrow 
\mathbb{R}
$ is a twice differentiable mapping such that $f^{\prime \prime }$ is
integrable and $0\leq \lambda \leq 1.$ Then the following identity holds:%
\begin{eqnarray*}
&&\left( \lambda -1\right) f\left( \frac{a+b}{2}\right) -\lambda \frac{%
f(a)+f(b)}{2}+\frac{1}{b-a}\int_{a}^{b}f(x)dx \\
&=&\left( b-a\right) ^{2}\int_{0}^{1}k(t)f^{\prime \prime }\left(
ta+(1-t\right) b)dt
\end{eqnarray*}%
where%
\begin{equation*}
k(t)=\left\{ 
\begin{array}{c}
\frac{1}{2}t\left( t-\lambda \right) ,\text{ \ \ \ \ \ \ \ \ \ \ \ \ \ \ \ \
\ \ }0\leq t\leq \frac{1}{2} \\ 
\\ 
\frac{1}{2}\left( 1-t\right) \left( 1-\lambda -t\right) ,\text{ \ \ \ \ }%
\frac{1}{2}\leq t\leq 1.%
\end{array}%
\right.
\end{equation*}
\end{lemma}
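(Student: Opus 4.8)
The plan is to verify the identity by repeatedly integrating by parts on the right-hand side until only elementary terms remain. First I would set $g(t)=f(ta+(1-t)b)$ for $t\in[0,1]$. Since $f''$ is integrable, $f'$ is absolutely continuous and $f$ is $C^1$, so $g'(t)=(a-b)f'(ta+(1-t)b)$ and $g''(t)=(b-a)^2f''(ta+(1-t)b)$ almost everywhere. Hence the claimed right-hand side equals $\int_0^1 k(t)g''(t)\,dt$, and since $g(1/2)=f(\frac{a+b}{2})$, $g(0)=f(b)$, $g(1)=f(a)$, and the substitution $x=ta+(1-t)b$ gives $\int_0^1 g(t)\,dt=\frac{1}{b-a}\int_a^b f(x)\,dx$, the whole lemma reduces to proving
\[
\int_0^1 k(t)g''(t)\,dt=(\lambda-1)\,g\Bigl(\tfrac12\Bigr)-\lambda\,\frac{g(0)+g(1)}{2}+\int_0^1 g(t)\,dt .
\]

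To prove this reduced identity I would split $\int_0^1 k g''=\int_0^{1/2}+\int_{1/2}^1$ and integrate by parts twice on each subinterval. On $(0,\tfrac12)$ one has $k(t)=\tfrac12(t^2-\lambda t)$, so $k'(t)=t-\tfrac{\lambda}{2}$ and $k''(t)\equiv1$; on $(\tfrac12,1)$ one has $k(t)=\tfrac12\bigl(t^2+(\lambda-2)t+1-\lambda\bigr)$, so $k'(t)=t-1+\tfrac{\lambda}{2}$ and again $k''(t)\equiv1$. Consequently the doubly integrated terms $\int k''g=\int g$ from the two halves combine to $\int_0^1 g$. For the boundary terms one checks $k(0)=k(1)=0$ and $k(\tfrac12^-)=k(\tfrac12^+)=\tfrac18-\tfrac{\lambda}{4}$, so the contributions containing $g'$ at the endpoints vanish and those containing $g'(\tfrac12)$ cancel between the two halves. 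However $k'$ is discontinuous at $t=\tfrac12$, with $k'(\tfrac12^-)=\tfrac{1-\lambda}{2}$ and $k'(\tfrac12^+)=\tfrac{\lambda-1}{2}$, and collecting the $g(\tfrac12)$ terms yields $\bigl(k'(\tfrac12^+)-k'(\tfrac12^-)\bigr)g(\tfrac12)=(\lambda-1)g(\tfrac12)$; finally $k'(0)=-\tfrac{\lambda}{2}$ and $k'(1)=\tfrac{\lambda}{2}$ produce the endpoint terms $-\tfrac{\lambda}{2}g(0)-\tfrac{\lambda}{2}g(1)$.

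Assembling these contributions gives exactly the reduced identity, and translating back through $g$ yields the lemma. I expect the main obstacle to be nothing more than careful bookkeeping of the boundary terms at $t=\tfrac12$: one must keep track of the one-sided values of $k$ and $k'$ and the signs arising from two successive integrations by parts on each half, since it is precisely the jump of $k'$ at the midpoint that produces the $(\lambda-1)f(\frac{a+b}{2})$ term. A minor technical point is that, because $f''$ is only assumed integrable, the integrations by parts should be justified via the absolute continuity of $f'$ (equivalently of $g'$) rather than by invoking continuity of $f''$.
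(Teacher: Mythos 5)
Your proof is correct. The paper itself does not prove this lemma---it is quoted from Sarikaya and Aktan \cite{s}---but your argument (reduce to $g(t)=f(ta+(1-t)b)$, split at $t=\tfrac12$, integrate by parts twice on each half, and track the jump of $k'$ at the midpoint, which is what generates the $(\lambda-1)f\left(\frac{a+b}{2}\right)$ term) is exactly the standard derivation of such kernels, and all of your boundary values $k(0)=k(1)=0$, $k(\tfrac12^{\pm})=\tfrac18-\tfrac{\lambda}{4}$, $k'(\tfrac12^{\pm})=\pm\tfrac{\lambda-1}{2}$, $k'(0)=-\tfrac{\lambda}{2}$, $k'(1)=\tfrac{\lambda}{2}$ check out.
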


In this paper, using the above Lemma we obtain some new inequalities for
functions which are introduced by Godunova and Levin.

\section{Main Results}

We obtain the following general integral inequalities via Lemma \ref{lem 1.1}%
.

\begin{theorem}
\label{teo 2.1} Let $I\subset 
\mathbb{R}
$ be an open interval, $a,b\in I$ with $a<b$ and $f:I\rightarrow 
\mathbb{R}
$ be a twice differentiable mapping such that $f^{\prime \prime }$ is
integrable. If $\left\vert f^{\prime \prime }\right\vert ^{q}$ belongs to $%
Q(I),$ then the following inequalities hold:%
\begin{eqnarray}
&&  \label{2.1} \\
&&\left\vert \left( \lambda -1\right) f\left( \frac{a+b}{2}\right) -\lambda 
\frac{f(a)+f(b)}{2}+\frac{1}{b-a}\int_{a}^{b}f(x)dx\right\vert  \notag \\
&\leq &\left\{ 
\begin{array}{c}
\frac{\left( b-a\right) ^{2}}{2}\left( \frac{\lambda ^{3}}{3}+\frac{%
1-3\lambda }{24}\right) ^{1-\frac{1}{q}} \\ 
\\ 
\times \left\{ \left( \left[ \lambda ^{2}-\frac{4\lambda -1}{8}\right]
\left\vert f^{\prime \prime }(a)\right\vert ^{q}+\left[ \ln \left( 2\left(
1-\lambda \right) ^{2}\right) ^{1-\lambda }+\frac{20\lambda -8\lambda ^{2}-5%
}{8}\right] \left\vert f^{\prime \prime }(b)\right\vert ^{q}\right) ^{\frac{1%
}{q}}\right. \\ 
\\ 
+\left. \left( \left[ \ln \left( 2\left( 1-\lambda \right) ^{2}\right)
^{1-\lambda }+\frac{20\lambda -8\lambda ^{2}-5}{8}\right] \left\vert
f^{\prime \prime }(a)\right\vert ^{q}+\left[ \lambda ^{2}-\frac{4\lambda -1}{%
8}\right] \left\vert f^{\prime \prime }(b)\right\vert ^{q}\right) ^{\frac{1}{%
q}}\right\} ,\text{ \ \ \ \ for }0\leq \lambda \leq \frac{1}{2} \\ 
\\ 
\frac{\left( b-a\right) ^{2}}{2}\left( \frac{3\lambda -1}{24}\right) ^{^{1-%
\frac{1}{q}}} \\ 
\\ 
\times \left\{ \left( \frac{4\lambda -1}{8}\left\vert f^{\prime \prime
}(a)\right\vert ^{q}+\left[ \frac{5-4\lambda }{8}-\left( \lambda -1\right)
\ln \frac{1}{2}\right] \left\vert f^{\prime \prime }(b)\right\vert
^{q}\right) ^{\frac{1}{q}}\right. \\ 
\\ 
+\left. \left( \left[ \frac{5-4\lambda }{8}-\left( \lambda -1\right) \ln 
\frac{1}{2}\right] \left\vert f^{\prime \prime }(a)\right\vert ^{q}+\frac{%
4\lambda -1}{8}\left\vert f^{\prime \prime }(b)\right\vert ^{q}\right) ^{%
\frac{1}{q}}\right\} ,\text{ \ \ \ \ for }\frac{1}{2}\leq \lambda \leq 1. \\ 
\end{array}%
\right.  \notag
\end{eqnarray}%
where $0\leq \lambda \leq 1$ and $q\geq 1.$
\end{theorem}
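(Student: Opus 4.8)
The plan is to apply the identity in Lemma~\ref{lem 1.1}, pass to absolute values, and then bound $\int_{0}^{1}\left\vert k(t)\right\vert \left\vert f^{\prime \prime }(ta+(1-t)b)\right\vert dt$. Since $k$ is given by two different formulas, I would split this integral at $t=\frac{1}{2}$: on $\left[ 0,\frac{1}{2}\right] $ one has $\left\vert k(t)\right\vert =\frac{1}{2}\,t\left\vert t-\lambda \right\vert $, and on $\left[ \frac{1}{2},1\right] $ one has $\left\vert k(t)\right\vert =\frac{1}{2}\,(1-t)\left\vert 1-\lambda -t\right\vert $, so the constants $(b-a)^{2}$ and $\frac{1}{2}$ come out in front, which accounts for the $\frac{(b-a)^{2}}{2}$ prefactor in \eqref{2.1}. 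In the integral over $\left[ \frac{1}{2},1\right] $ I would then substitute $s=1-t$; this turns $(1-t)\left\vert 1-\lambda -t\right\vert $ into $s\left\vert s-\lambda \right\vert $ and turns $f^{\prime \prime }(ta+(1-t)b)$ into $f^{\prime \prime }((1-s)a+sb)$, which is precisely the source of the two symmetric summands appearing in \eqref{2.1}.

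On each of the two resulting pieces I would apply the power-mean integral inequality with exponents $1-\frac{1}{q}$ and $\frac{1}{q}$, writing $t\left\vert t-\lambda \right\vert =\left( t\left\vert t-\lambda \right\vert \right) ^{1-1/q}\left( t\left\vert t-\lambda \right\vert \right) ^{1/q}$. This extracts the factor $\left( \int_{0}^{1/2}t\left\vert t-\lambda \right\vert dt\right) ^{1-1/q}$, which equals $\left( \frac{\lambda ^{3}}{3}+\frac{1-3\lambda }{24}\right) ^{1-1/q}$ when $0\leq \lambda \leq \frac{1}{2}$ and $\left( \frac{3\lambda -1}{24}\right) ^{1-1/q}$ when $\frac{1}{2}\leq \lambda \leq 1$, that is, the two power-mean factors in the statement. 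For the remaining factor $\int_{0}^{1/2}t\left\vert t-\lambda \right\vert \left\vert f^{\prime \prime }(ta+(1-t)b)\right\vert ^{q}dt$ I would use the hypothesis $\left\vert f^{\prime \prime }\right\vert ^{q}\in Q(I)$ through Definition~\ref{def 1.1} with $x=a$, $y=b$ and parameter $t$, which gives $\left\vert f^{\prime \prime }(ta+(1-t)b)\right\vert ^{q}\leq \frac{\left\vert f^{\prime \prime }(a)\right\vert ^{q}}{t}+\frac{\left\vert f^{\prime \prime }(b)\right\vert ^{q}}{1-t}$ (and the analogue with $a$ and $b$ interchanged on the transformed $\left[ \frac{1}{2},1\right] $ piece, which is why the coefficients $A$ and the logarithmic coefficient are swapped in the second summand). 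Distributing the two terms reduces the whole estimate to the two elementary integrals $\int_{0}^{1/2}\left\vert t-\lambda \right\vert dt$ and $\int_{0}^{1/2}\frac{t\left\vert t-\lambda \right\vert }{1-t}dt$, whose values are exactly the coefficients $\lambda ^{2}-\frac{4\lambda -1}{8}$ and $\ln \left( 2(1-\lambda )^{2}\right) ^{1-\lambda }+\frac{20\lambda -8\lambda ^{2}-5}{8}$ when $\lambda \leq \frac{1}{2}$, and $\frac{4\lambda -1}{8}$ and $\frac{5-4\lambda }{8}-(\lambda -1)\ln \frac{1}{2}$ when $\lambda \geq \frac{1}{2}$. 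Combining the two pieces, each carrying the same power-mean factor, then gives the right-hand side of \eqref{2.1}.

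The only genuinely delicate point is the evaluation of these last two integrals, and it is purely computational. The factor $\left\vert t-\lambda \right\vert $ forces a split of $\left[ 0,\frac{1}{2}\right] $ at $t=\lambda $ when $0\leq \lambda \leq \frac{1}{2}$, whereas no split is needed when $\frac{1}{2}\leq \lambda \leq 1$ since then $t-\lambda $ has constant sign on $\left[ 0,\frac{1}{2}\right] $. For $\int_{0}^{1/2}\frac{t\left\vert t-\lambda \right\vert }{1-t}dt$ one first performs the division $\frac{t(t-\lambda )}{1-t}=-t-(1-\lambda )+\frac{1-\lambda }{1-t}$, so that the antiderivative contains the term $-(1-\lambda )\ln (1-t)$; evaluating at $t=\lambda $ and $t=\frac{1}{2}$ produces the contributions $(1-\lambda )\ln (1-\lambda )$ and $-(1-\lambda )\ln \frac{1}{2}=(1-\lambda )\ln 2$, which combine into $(1-\lambda )\ln \left( 2(1-\lambda )^{2}\right) =\ln \left( 2(1-\lambda )^{2}\right) ^{1-\lambda }$. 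Apart from carrying the case distinction $0\leq \lambda \leq \frac{1}{2}$ versus $\frac{1}{2}\leq \lambda \leq 1$ consistently through all of these integrals (and noting that $\int_{0}^{1/2}t\left\vert t-\lambda \right\vert dt=\int_{1/2}^{1}(1-t)\left\vert 1-\lambda -t\right\vert dt$ by the same substitution $s=1-t$), I do not anticipate any conceptual obstacle.
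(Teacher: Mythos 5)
Your proposal follows essentially the same route as the paper: apply Lemma \ref{lem 1.1}, split at $t=\tfrac12$, use the power-mean inequality to extract $\bigl(\int_{0}^{1/2}t\left\vert t-\lambda\right\vert dt\bigr)^{1-1/q}$, insert the $Q(I)$ bound $\left\vert f''(ta+(1-t)b)\right\vert^{q}\leq \frac{\left\vert f''(a)\right\vert^{q}}{t}+\frac{\left\vert f''(b)\right\vert^{q}}{1-t}$, and evaluate the resulting elementary integrals with the case split at $t=\lambda$ (resp.\ $t=1-\lambda$); your computed coefficients agree with \eqref{2.2}--\eqref{2.8}. The only (cosmetic) difference is that you obtain the second summand by the substitution $s=1-t$ rather than computing the $\left[\tfrac12,1\right]$ integrals directly as the paper does, which just makes the $a\leftrightarrow b$ symmetry explicit.
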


\begin{proof}
From Lemma \ref{lem 1.1} and using the power mean inequality, we have%
\begin{eqnarray*}
&&\left\vert \left( \lambda -1\right) f\left( \frac{a+b}{2}\right) -\lambda 
\frac{f(a)+f(b)}{2}+\frac{1}{b-a}\int_{a}^{b}f(x)dx\right\vert \\
&\leq &\left( b-a\right) ^{2}\int_{0}^{1}\left\vert k(t)\right\vert
\left\vert f^{\prime \prime }\left( ta+(1-t\right) b)\right\vert dt \\
&\leq &\frac{\left( b-a\right) ^{2}}{2}\left\{ \int_{0}^{\frac{1}{2}%
}\left\vert t\left( t-\lambda \right) \right\vert \left\vert f^{\prime
\prime }\left( ta+(1-t\right) b)\right\vert dt+\int_{\frac{1}{2}%
}^{1}\left\vert \left( 1-t\right) \left( 1-\lambda -t\right) \right\vert
\left\vert f^{\prime \prime }\left( ta+(1-t\right) b)\right\vert dt\right\}
\\
&\leq &\frac{\left( b-a\right) ^{2}}{2}\left\{ \left( \int_{0}^{\frac{1}{2}%
}\left\vert t\left( t-\lambda \right) \right\vert dt\right) ^{1-\frac{1}{q}%
}\left( \int_{0}^{\frac{1}{2}}\left\vert t\left( t-\lambda \right)
\right\vert \left\vert f^{\prime \prime }\left( ta+(1-t\right) b)\right\vert
^{q}dt\right) ^{\frac{1}{q}}\right. \\
&&\left. +\left( \int_{\frac{1}{2}}^{1}\left\vert \left( 1-t\right) \left(
1-\lambda -t\right) \right\vert dt\right) ^{1-\frac{1}{q}}\left( \int_{\frac{%
1}{2}}^{1}\left\vert \left( 1-t\right) \left( 1-\lambda -t\right)
\right\vert \left\vert f^{\prime \prime }\left( ta+(1-t\right) b)\right\vert
dt\right) ^{\frac{1}{q}}\right\} .
\end{eqnarray*}%
Let $0\leq \lambda \leq \frac{1}{2}.$ Then, since $\left\vert f^{\prime
\prime }\right\vert ^{q}$ belongs to $Q(I)$, we can write for $t\in (0,1)$%
\begin{equation*}
\left\vert f^{\prime \prime }\left( ta+(1-t\right) b)\right\vert ^{q}\leq 
\frac{\left\vert f^{\prime \prime }(a)\right\vert ^{q}}{t}+\frac{\left\vert
f^{\prime \prime }(b)\right\vert ^{q}}{1-t}.
\end{equation*}%
Hence,%
\begin{eqnarray}
&&  \label{2.2} \\
&&\int_{0}^{\frac{1}{2}}\left\vert t\left( t-\lambda \right) \right\vert
\left\vert f^{\prime \prime }\left( ta+(1-t\right) b)\right\vert dt  \notag
\\
&\leq &\int_{0}^{\lambda }t(\lambda -t)\left[ \frac{\left\vert f^{\prime
\prime }(a)\right\vert ^{q}}{t}+\frac{\left\vert f^{\prime \prime
}(b)\right\vert ^{q}}{1-t}\right] dt+\int_{\lambda }^{\frac{1}{2}}t\left(
t-\lambda \right) \left[ \frac{\left\vert f^{\prime \prime }(a)\right\vert
^{q}}{t}+\frac{\left\vert f^{\prime \prime }(b)\right\vert ^{q}}{1-t}\right]
dt  \notag \\
&=&\left[ \lambda ^{2}-\frac{4\lambda -1}{8}\right] \left\vert f^{\prime
\prime }(a)\right\vert ^{q}+\left[ \ln \left( 2\left( 1-\lambda \right)
^{2}\right) ^{1-\lambda }+\frac{20\lambda -8\lambda ^{2}-5}{8}\right]
\left\vert f^{\prime \prime }(b)\right\vert ^{q},  \notag
\end{eqnarray}%
\begin{eqnarray}
&&  \label{2.3} \\
&&\int_{\frac{1}{2}}^{1}\left\vert \left( 1-t\right) \left( 1-\lambda
-t\right) \right\vert \left\vert f^{\prime \prime }\left( ta+(1-t\right)
b)\right\vert dt  \notag \\
&\leq &\int_{\frac{1}{2}}^{1-\lambda }\left( 1-t\right) \left( 1-\lambda
-t\right) \left[ \frac{\left\vert f^{\prime \prime }(a)\right\vert ^{q}}{t}+%
\frac{\left\vert f^{\prime \prime }(b)\right\vert ^{q}}{1-t}\right] dt 
\notag \\
&&+\int_{1-\lambda }^{1}\left( 1-t\right) \left( t+\lambda -1\right) \left[ 
\frac{\left\vert f^{\prime \prime }(a)\right\vert ^{q}}{t}+\frac{\left\vert
f^{\prime \prime }(b)\right\vert ^{q}}{1-t}\right] dt  \notag \\
&=&\left[ \ln \left( 2\left( 1-\lambda \right) ^{2}\right) ^{1-\lambda }+%
\frac{20\lambda -8\lambda ^{2}-5}{8}\right] \left\vert f^{\prime \prime
}(a)\right\vert ^{q}+\left[ \lambda ^{2}-\frac{4\lambda -1}{8}\right]
\left\vert f^{\prime \prime }(b)\right\vert ^{q},  \notag
\end{eqnarray}%
\begin{equation}
\int_{0}^{\frac{1}{2}}\left\vert t\left( t-\lambda \right) \right\vert
dt=\int_{0}^{\lambda }t\left( \lambda -t\right) dt+\int_{\lambda }^{\frac{1}{%
2}}t\left( t-\lambda \right) dt=\frac{\lambda ^{3}}{3}+\frac{1-3\lambda }{24}
\label{2.4}
\end{equation}%
and%
\begin{equation}
\int_{\frac{1}{2}}^{1}\left\vert \left( 1-t\right) \left( 1-\lambda
-t\right) \right\vert dt=\int_{\frac{1}{2}}^{1-\lambda }\left( 1-t\right)
\left( 1-\lambda -t\right) dt+\int_{1-\lambda }^{1}\left( 1-t\right) \left(
t+\lambda -1\right) dt=\frac{\lambda ^{3}}{3}+\frac{1-3\lambda }{24}.
\label{2.5}
\end{equation}%
If we use (\ref{2.2})-(\ref{2.5}) in (\ref{2.1}), we obtain the first
inequality in (\ref{2.1}).

Let $\frac{1}{2}\leq \lambda \leq 1.$ Then,%
\begin{eqnarray}
&&  \label{2.6} \\
&&\int_{0}^{\frac{1}{2}}\left\vert t\left( t-\lambda \right) \right\vert
\left\vert f^{\prime \prime }\left( ta+(1-t\right) b)\right\vert dt  \notag
\\
&\leq &\int_{0}^{\frac{1}{2}}t\left( \lambda -t\right) \left[ \frac{%
\left\vert f^{\prime \prime }(a)\right\vert ^{q}}{t}+\frac{\left\vert
f^{\prime \prime }(b)\right\vert ^{q}}{1-t}\right] dt  \notag \\
&=&\frac{4\lambda -1}{8}\left\vert f^{\prime \prime }(a)\right\vert ^{q}+%
\left[ \frac{5-4\lambda }{8}-\left( \lambda -1\right) \ln \frac{1}{2}\right]
\left\vert f^{\prime \prime }(b)\right\vert ^{q},  \notag
\end{eqnarray}%
\begin{eqnarray}
&&  \label{2.7} \\
&&\int_{\frac{1}{2}}^{1}\left\vert \left( 1-t\right) \left( 1-\lambda
-t\right) \right\vert \left\vert f^{\prime \prime }\left( ta+(1-t\right)
b)\right\vert dt  \notag \\
&\leq &\int_{\frac{1}{2}}^{1}\left( 1-t\right) \left( t+\lambda -1\right) %
\left[ \frac{\left\vert f^{\prime \prime }(a)\right\vert ^{q}}{t}+\frac{%
\left\vert f^{\prime \prime }(b)\right\vert ^{q}}{1-t}\right] dt  \notag \\
&=&\left[ \frac{5-4\lambda }{8}-\left( \lambda -1\right) \ln \frac{1}{2}%
\right] \left\vert f^{\prime \prime }(a)\right\vert ^{q}+\frac{4\lambda -1}{8%
}\left\vert f^{\prime \prime }(b)\right\vert ^{q},  \notag
\end{eqnarray}%
and%
\begin{equation}
\int_{0}^{\frac{1}{2}}\left\vert t\left( t-\lambda \right) \right\vert
dt=\int_{\frac{1}{2}}^{1}\left\vert \left( 1-t\right) \left( 1-\lambda
-t\right) \right\vert dt=\frac{3\lambda -1}{24}.  \label{2.8}
\end{equation}%
If we use (\ref{2.6})-(\ref{2.8}) in (\ref{2.1}), we obtain the second
inequality in (\ref{2.1}). The proof is completed.
\end{proof}

\begin{corollary}
\label{co 1.1} In Theorem \ref{teo 2.1}, if we choose $q=1$ we obtain the
following inequalities:%
\begin{eqnarray*}
&&\left\vert \left( \lambda -1\right) f\left( \frac{a+b}{2}\right) -\lambda 
\frac{f(a)+f(b)}{2}+\frac{1}{b-a}\int_{a}^{b}f(x)dx\right\vert \\
&\leq &\left\{ 
\begin{array}{c}
\frac{\left( b-a\right) ^{2}}{2}\left[ \ln \left( 2\left( 1-\lambda \right)
^{2}\right) ^{1-\lambda }+\frac{16\lambda -4}{8}\right] \left[ \left\vert
f^{\prime \prime }(a)\right\vert +\left\vert f^{\prime \prime
}(b)\right\vert \right] ,\text{ \ \ \ \ for }0\leq \lambda \leq \frac{1}{2}
\\ 
\\ 
\frac{\left( b-a\right) ^{2}}{2}\left[ \ln 2^{\lambda -1}e^{\frac{1}{2}}%
\right] \left[ \left\vert f^{\prime \prime }(a)\right\vert +\left\vert
f^{\prime \prime }(b)\right\vert \right] ,\text{ \ \ \ \ \ \ \ \ \ \ \ \ \ \
\ \ \ \ \ \ \ \ \ \ \ \ \ \ \ \ for }\frac{1}{2}\leq \lambda \leq 1.%
\end{array}%
\right.
\end{eqnarray*}
\end{corollary}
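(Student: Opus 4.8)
The plan is to specialize Theorem \ref{teo 2.1} to $q=1$ and then carry out the resulting algebraic simplification. With $q=1$ the exponent $1-\frac1q$ vanishes, so each power-mean prefactor $\left(\frac{\lambda^{3}}{3}+\frac{1-3\lambda}{24}\right)^{1-\frac1q}$ and $\left(\frac{3\lambda-1}{24}\right)^{1-\frac1q}$ equals $1$, and the outer exponents $\frac1q$ also become $1$, so the two parenthesised terms in each branch of (\ref{2.1}) turn into plain linear combinations of $|f^{\prime\prime}(a)|$ and $|f^{\prime\prime}(b)|$. Equivalently, one can bypass the power-mean step entirely: for $q=1$ the estimate displayed at the beginning of the proof of Theorem \ref{teo 2.1} reads $\frac{(b-a)^{2}}{2}\left\{\int_{0}^{1/2}|t(t-\lambda)|\,|f^{\prime\prime}(ta+(1-t)b)|\,dt+\int_{1/2}^{1}|(1-t)(1-\lambda-t)|\,|f^{\prime\prime}(ta+(1-t)b)|\,dt\right\}$, and one substitutes $|f^{\prime\prime}(ta+(1-t)b)|\le \frac{|f^{\prime\prime}(a)|}{t}+\frac{|f^{\prime\prime}(b)|}{1-t}$ and invokes the integral evaluations already carried out there, namely (\ref{2.2})--(\ref{2.3}) for $0\le\lambda\le\frac12$ and (\ref{2.6})--(\ref{2.7}) for $\frac12\le\lambda\le1$.

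For $0\le\lambda\le\frac12$, adding the right-hand sides of (\ref{2.2}) and (\ref{2.3}) produces a common coefficient $\left[\lambda^{2}-\frac{4\lambda-1}{8}\right]+\left[\ln\left(2(1-\lambda)^{2}\right)^{1-\lambda}+\frac{20\lambda-8\lambda^{2}-5}{8}\right]$ in front of both $|f^{\prime\prime}(a)|$ and $|f^{\prime\prime}(b)|$; collecting the polynomial part, $\lambda^{2}-\frac{4\lambda-1}{8}+\frac{20\lambda-8\lambda^{2}-5}{8}=\frac{16\lambda-4}{8}$, which gives the first claimed inequality. For $\frac12\le\lambda\le1$, adding (\ref{2.6}) and (\ref{2.7}) gives the common coefficient $\frac{4\lambda-1}{8}+\frac{5-4\lambda}{8}-(\lambda-1)\ln\frac12=\frac12+(\lambda-1)\ln 2=\ln\left(2^{\lambda-1}e^{1/2}\right)$, which is the second claimed inequality. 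In both cases the prefactor $\frac{(b-a)^{2}}{2}$ is carried through unchanged.

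There is essentially no obstacle here; the only point to watch is the degenerate value $\lambda=\frac13$, where the base $\frac{3\lambda-1}{24}$ of the second prefactor vanishes and a literal substitution would write $0^{0}$. This is harmless, since for $q=1$ the prefactor is not needed at all (the power-mean inequality is trivial), so the direct route of the first paragraph handles every $\lambda\in[0,1]$ uniformly. The remainder is just the bookkeeping of combining the two symmetric integral estimates and simplifying the constants, after which the proof is complete.
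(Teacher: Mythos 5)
Your proposal is correct and follows exactly the route the paper intends: the corollary is stated without a separate proof precisely because setting $q=1$ kills the power-mean prefactors and reduces each branch of (\ref{2.1}) to the sum of the bracketed coefficients, and your simplifications $\lambda^{2}-\frac{4\lambda-1}{8}+\frac{20\lambda-8\lambda^{2}-5}{8}=\frac{16\lambda-4}{8}$ and $\frac{4\lambda-1}{8}+\frac{5-4\lambda}{8}+(\lambda-1)\ln 2=\ln\bigl(2^{\lambda-1}e^{1/2}\bigr)$ check out. Your aside about $\lambda=\frac13$ is moot (that value lies in the first branch, where the prefactor is $\frac{1}{81}\neq 0$), but it does not affect the argument.
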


\section{Applications}

\begin{proposition}
\label{prop 3.1} If we choose $\lambda =0$ in Corollary \ref{co 1.1}, we
obtain the following inequality:%
\begin{equation*}
\left\vert \frac{1}{b-a}\int_{a}^{b}f(x)dx-f\left( \frac{a+b}{2}\right)
\right\vert \leq \frac{\left( b-a\right) ^{2}}{4}\left( \ln \frac{4}{e}%
\right) \left[ \left\vert f^{\prime \prime }(a)\right\vert +\left\vert
f^{\prime \prime }(b)\right\vert \right] .
\end{equation*}
\end{proposition}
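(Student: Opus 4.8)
The plan is to specialize Corollary \ref{co 1.1} to the value $\lambda = 0$. Since $0$ lies in the range $[0,\frac12]$, the first of the two branches of Corollary \ref{co 1.1} is the one that applies. First I would substitute $\lambda = 0$ into the left-hand side: the term $(\lambda-1)f\left(\frac{a+b}{2}\right)$ becomes $-f\left(\frac{a+b}{2}\right)$ and the term $\lambda\frac{f(a)+f(b)}{2}$ vanishes, so the left-hand side collapses to $\left\vert \frac{1}{b-a}\int_{a}^{b}f(x)\,dx - f\left(\frac{a+b}{2}\right)\right\vert$, which is precisely the quantity appearing in the statement of the proposition.

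Next I would simplify the constant on the right-hand side. With $\lambda = 0$ we have $(1-\lambda)^2 = 1$, hence $\bigl(2(1-\lambda)^2\bigr)^{1-\lambda} = 2^{1} = 2$, so $\ln\bigl(2(1-\lambda)^2\bigr)^{1-\lambda} = \ln 2$; moreover $\frac{16\lambda-4}{8} = -\frac12$. Therefore the bracketed factor in Corollary \ref{co 1.1} equals $\ln 2 - \frac12$, and the bound becomes $\frac{(b-a)^2}{2}\bigl(\ln 2 - \frac12\bigr)\bigl[\,\left\vert f''(a)\right\vert + \left\vert f''(b)\right\vert\,\bigr]$.

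Finally I would recast $\ln 2 - \frac12$ in the form stated in the proposition. Using $\frac12 = \ln e^{1/2}$ together with $2\ln 2 = \ln 4$, one gets $\ln 2 - \frac12 = \frac12(2\ln 2 - 1) = \frac12\ln\frac{4}{e}$, so that $\frac{(b-a)^2}{2}\bigl(\ln 2 - \frac12\bigr) = \frac{(b-a)^2}{4}\ln\frac{4}{e}$, which yields the claimed inequality. There is essentially no obstacle to overcome here: the argument is a direct substitution into an already-established corollary followed by a one-line logarithm identity. The only points that deserve a moment of care are confirming that $\lambda = 0$ indeed falls into the first branch of Corollary \ref{co 1.1} and verifying the elementary identity $\ln 2 - \frac12 = \frac12\ln\frac{4}{e}$.
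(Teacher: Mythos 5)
Your proposal is correct and is exactly the computation the paper intends: substitute $\lambda=0$ into the first branch of Corollary \ref{co 1.1}, note the bracket becomes $\ln 2-\tfrac12=\tfrac12\ln\tfrac{4}{e}$, and absorb the factor $\tfrac12$ into the leading constant. Nothing further is needed.
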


\begin{proposition}
\label{prop 3.2} If we choose $\lambda =1$ in Corollary \ref{co 1.1}, we
obtain the following inequality:%
\begin{equation*}
\left\vert \frac{1}{b-a}\int_{a}^{b}f(x)dx-\frac{f(a)+f(b)}{2}\right\vert
\leq \frac{\left( b-a\right) ^{2}}{2}\left( \ln e^{\frac{1}{2}}\right) \left[
\left\vert f^{\prime \prime }(a)\right\vert +\left\vert f^{\prime \prime
}(b)\right\vert \right] .
\end{equation*}
\end{proposition}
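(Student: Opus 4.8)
The plan is to derive this inequality as an immediate specialization of Corollary \ref{co 1.1}, so no new estimation is needed. First I would note that the chosen value $\lambda = 1$ falls in the range $\frac{1}{2}\leq \lambda \leq 1$, so the bound that applies is the second branch of Corollary \ref{co 1.1}, namely
\[
\frac{\left( b-a\right) ^{2}}{2}\left[ \ln 2^{\lambda -1}e^{\frac{1}{2}}\right] \left[ \left\vert f^{\prime \prime }(a)\right\vert +\left\vert f^{\prime \prime }(b)\right\vert \right].
\]

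Next I would simplify both sides under the substitution $\lambda =1$. On the left-hand side, the coefficient $\lambda -1$ in front of $f\left( \frac{a+b}{2}\right) $ vanishes, so that term disappears, while $-\lambda \frac{f(a)+f(b)}{2}$ becomes $-\frac{f(a)+f(b)}{2}$; thus the left side reduces exactly to $\left\vert \frac{1}{b-a}\int_{a}^{b}f(x)dx-\frac{f(a)+f(b)}{2}\right\vert $, which is the expression in the statement. On the right-hand side, $2^{\lambda -1}=2^{0}=1$, hence $\ln 2^{\lambda -1}e^{\frac{1}{2}}=\ln e^{\frac{1}{2}}$, and the constant collapses to $\frac{\left( b-a\right) ^{2}}{2}\ln e^{\frac{1}{2}}$ (which equals $\frac{\left( b-a\right) ^{2}}{4}$ once the logarithm is evaluated). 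Combining these two simplifications yields precisely the claimed inequality.

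There is essentially no obstacle here; the only point deserving a moment's attention is that the endpoint $\lambda =1$ is genuinely covered by the second case of the corollary (it is, since the case thresholds overlap at $\lambda =\frac{1}{2}$ and the second case runs up to $\lambda =1$), and that the logarithmic term stays finite at this endpoint, which it visibly does. As an optional internal consistency check, one could instead insert $q=1$ together with $\lambda =1$ directly into Theorem \ref{teo 2.1} and confirm that the same bound emerges.
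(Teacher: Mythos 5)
Your proposal is correct and follows exactly the route the paper intends: substituting $\lambda=1$ into the second branch of Corollary \ref{co 1.1}, noting that $2^{\lambda-1}=1$ so the constant becomes $\ln e^{\frac{1}{2}}=\frac{1}{2}$, and that the term $(\lambda-1)f\left(\frac{a+b}{2}\right)$ vanishes on the left-hand side. Nothing further is needed.
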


\begin{proposition}
\label{prop 3.3} If we choose $\lambda =\frac{1}{3}$ in Corollary \ref{co
1.1}, we obtain the following inequality:%
\begin{equation*}
\left\vert \frac{1}{6}\left[ f(a)+4f\left( \frac{a+b}{2}\right) +f(b)\right]
-\frac{1}{b-a}\int_{a}^{b}f(x)dx\right\vert \leq \frac{\left( b-a\right) ^{2}%
}{2}\left[ \frac{2}{3}\ln \frac{8}{9}+\frac{1}{6}\right] \left[ \left\vert
f^{\prime \prime }(a)\right\vert +\left\vert f^{\prime \prime
}(b)\right\vert \right] .
\end{equation*}
\end{proposition}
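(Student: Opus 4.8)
The plan is to substitute $\lambda=\frac13$ directly into Corollary \ref{co 1.1}. Since $0\le\frac13\le\frac12$, the relevant branch is the first one, namely
\[
\left\vert (\lambda-1)f\!\left(\tfrac{a+b}{2}\right)-\lambda\tfrac{f(a)+f(b)}{2}+\tfrac{1}{b-a}\int_a^b f(x)\,dx\right\vert
\le \frac{(b-a)^2}{2}\left[\ln\left(2(1-\lambda)^2\right)^{1-\lambda}+\frac{16\lambda-4}{8}\right]\left[\,\vert f''(a)\vert+\vert f''(b)\vert\,\right].
\]
So there is nothing to prove beyond specializing $\lambda$ and simplifying both sides.

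First I would rewrite the left-hand side. With $\lambda=\frac13$ we have $\lambda-1=-\frac23$ and $\frac{\lambda}{2}=\frac16$, so the quantity inside the absolute value equals $-\frac23 f\!\left(\frac{a+b}{2}\right)-\frac16 f(a)-\frac16 f(b)+\frac{1}{b-a}\int_a^b f(x)\,dx$. Factoring $-\frac16$ out of the first three terms and using $\frac23=\frac46$, this is $-\frac16\left[f(a)+4f\!\left(\frac{a+b}{2}\right)+f(b)\right]+\frac{1}{b-a}\int_a^b f(x)\,dx$; since $\vert -x\vert=\vert x\vert$, its absolute value is exactly the left-hand side claimed in the proposition.

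Next I would evaluate the constant on the right. With $1-\lambda=\frac23$ we get $2(1-\lambda)^2=2\cdot\frac49=\frac89$, hence $\ln\left(2(1-\lambda)^2\right)^{1-\lambda}=\ln\left(\frac89\right)^{2/3}=\frac23\ln\frac89$. Also $\frac{16\lambda-4}{8}=\frac{\frac{16}{3}-4}{8}=\frac{4/3}{8}=\frac16$. Therefore the bracket collapses to $\frac23\ln\frac89+\frac16$, and inserting this together with the rewriting of the left-hand side into the displayed inequality yields the assertion.

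I expect no real obstacle here beyond the bookkeeping: the only things to check with care are that $\frac13$ indeed lies in the regime $0\le\lambda\le\frac12$ (so the first branch of Corollary \ref{co 1.1} is the one that applies) and that the two algebraic simplifications of $\ln\left(2(1-\lambda)^2\right)^{1-\lambda}$ and of $\frac{16\lambda-4}{8}$ are carried out correctly. One may additionally remark that the resulting estimate is of Simpson type, since $\frac16\left[f(a)+4f\!\left(\frac{a+b}{2}\right)+f(b)\right]$ is exactly the Simpson quadrature functional appearing in Theorem \ref{teo 1.2}.
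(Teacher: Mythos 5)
Your proof is correct and is exactly the argument the paper intends: the proposition follows by setting $\lambda=\frac13$ in the first branch of Corollary \ref{co 1.1} (valid since $0\le\frac13\le\frac12$), rewriting the left-hand side as the Simpson functional, and simplifying $\ln\left(2(1-\lambda)^2\right)^{1-\lambda}=\frac23\ln\frac89$ and $\frac{16\lambda-4}{8}=\frac16$. All the algebra checks out.
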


\begin{proposition}
\label{prop 3.4} If we choose $\lambda =\frac{1}{2}$ in Corollary \ref{co
1.1}, we obtain the following inequality:%
\begin{equation*}
\left\vert \frac{1}{b-a}\int_{a}^{b}f(x)dx-\frac{1}{2}\left[ f\left( \frac{%
a+b}{2}\right) +\frac{f(a)+f(b)}{2}\right] \right\vert \leq \frac{\left(
b-a\right) ^{2}}{4}\left( \ln \frac{1}{2}+1\right) \left[ \left\vert
f^{\prime \prime }(a)\right\vert +\left\vert f^{\prime \prime
}(b)\right\vert \right] .
\end{equation*}
\end{proposition}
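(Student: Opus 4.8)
The plan is to obtain the claimed inequality by a direct specialization of Corollary \ref{co 1.1} at $\lambda=\frac12$; no new analytic input is required beyond that corollary and elementary logarithm identities.

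First I would substitute $\lambda=\frac12$ into the left-hand side of Corollary \ref{co 1.1}. The combination $(\lambda-1)f\!\left(\frac{a+b}{2}\right)-\lambda\frac{f(a)+f(b)}{2}+\frac{1}{b-a}\int_a^b f$ becomes $-\frac12 f\!\left(\frac{a+b}{2}\right)-\frac14\bigl(f(a)+f(b)\bigr)+\frac{1}{b-a}\int_a^b f$, which, after pulling the factor $-\frac12$ out of the two averaged terms, is exactly $\frac{1}{b-a}\int_a^b f(x)\,dx-\frac12\left[f\!\left(\frac{a+b}{2}\right)+\frac{f(a)+f(b)}{2}\right]$; taking absolute values reproduces the left-hand side of the proposition.

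Next I would simplify the constant on the right-hand side. Since $\lambda=\frac12$ is the common endpoint of the two ranges in Corollary \ref{co 1.1}, either branch may be used; using the branch for $\frac12\le\lambda\le1$, the factor is $\frac{(b-a)^2}{2}\left[\ln 2^{\lambda-1}e^{1/2}\right]$, which at $\lambda=\frac12$ equals $\frac{(b-a)^2}{2}\ln\!\bigl(2^{-1/2}e^{1/2}\bigr)=\frac{(b-a)^2}{2}\cdot\frac12\ln\frac{e}{2}=\frac{(b-a)^2}{4}\left(1-\ln 2\right)=\frac{(b-a)^2}{4}\left(\ln\frac12+1\right)$. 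Combining this with the rearranged left-hand side yields the stated inequality. As a consistency check, substituting $\lambda=\frac12$ into the $0\le\lambda\le\frac12$ branch, whose constant is $\frac{(b-a)^2}{2}\left[\ln(2(1-\lambda)^2)^{1-\lambda}+\frac{16\lambda-4}{8}\right]$, gives $\frac{(b-a)^2}{2}\left[-\frac12\ln 2+\frac12\right]$, the same value.

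There is no genuine obstacle here: the only points requiring care are the bookkeeping in the two logarithm simplifications — recognizing $2^{-1/2}e^{1/2}=(e/2)^{1/2}$ and $1-\ln 2=\ln\frac12+1$ — and verifying that the coefficients $-\frac12$ and $-\frac14$ appearing on the left-hand side are correctly absorbed into the form displayed in the proposition.
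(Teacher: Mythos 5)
Your proposal is correct and coincides with what the paper does: Proposition \ref{prop 3.4} is obtained simply by substituting $\lambda=\tfrac12$ into Corollary \ref{co 1.1}, rearranging the left-hand side, and simplifying $\ln\bigl(2^{-1/2}e^{1/2}\bigr)=\tfrac12\bigl(1-\ln 2\bigr)=\tfrac12\bigl(\ln\tfrac12+1\bigr)$. Your additional check that both branches of the corollary agree at the common endpoint $\lambda=\tfrac12$ is a nice confirmation but not needed.
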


\begin{proposition}
\label{prop 3.5} If we choose $\lambda =0$ in Theorem \ref{teo 2.1}, we
obtain the following inequality:%
\begin{eqnarray*}
\left\vert \frac{1}{b-a}\int_{a}^{b}f(x)dx-f\left( \frac{a+b}{2}\right)
\right\vert &\leq &\frac{\left( b-a\right) ^{2}}{2}\left( \frac{1}{24}%
\right) ^{1-\frac{1}{q}}\left\{ \left( \frac{1}{8}\left\vert f^{\prime
\prime }(a)\right\vert ^{q}+\left( \ln 2-\frac{5}{8}\right) \left\vert
f^{\prime \prime }(b)\right\vert ^{q}\right) ^{\frac{1}{q}}\right. \\
&&\left. +\left( \left( \ln 2-\frac{5}{8}\right) \left\vert f^{\prime \prime
}(a)\right\vert ^{q}+\frac{1}{8}\left\vert f^{\prime \prime }(b)\right\vert
^{q}\right) ^{\frac{1}{q}}\right\} .
\end{eqnarray*}
\end{proposition}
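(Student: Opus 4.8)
The plan is to derive Proposition \ref{prop 3.5} as the direct specialization $\lambda = 0$ of Theorem \ref{teo 2.1}. Since $\lambda = 0$ satisfies $0 \le \lambda \le \frac{1}{2}$, the relevant estimate is the first branch of inequality (\ref{2.1}) with $q \ge 1$ arbitrary, so the whole task is to substitute $\lambda = 0$ into each term appearing there and simplify.

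First I would handle the left-hand side: with $\lambda = 0$ the quantity $(\lambda - 1) f\!\left(\frac{a+b}{2}\right) - \lambda \frac{f(a)+f(b)}{2} + \frac{1}{b-a}\int_a^b f(x)\,dx$ reduces to $\frac{1}{b-a}\int_a^b f(x)\,dx - f\!\left(\frac{a+b}{2}\right)$, so its absolute value is precisely the left side of the claimed inequality. Next I would simplify the three $\lambda$-dependent coefficients on the right: the power-mean prefactor $\left(\frac{\lambda^3}{3} + \frac{1-3\lambda}{24}\right)^{1-\frac{1}{q}}$ becomes $\left(\frac{1}{24}\right)^{1-\frac{1}{q}}$; the coefficient $\lambda^2 - \frac{4\lambda - 1}{8}$ becomes $\frac{1}{8}$; and $\ln\!\left(2(1-\lambda)^2\right)^{1-\lambda} + \frac{20\lambda - 8\lambda^2 - 5}{8}$ becomes $\ln 2 - \frac{5}{8}$. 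Inserting these three values into the first branch of (\ref{2.1}) produces exactly the asserted bound.

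There is no genuine obstacle here beyond bookkeeping; the only point worth a remark is that taking $\lambda = 0$ is a bona fide instance of Theorem \ref{teo 2.1} and not merely a limiting case, since both that theorem and Lemma \ref{lem 1.1} are stated on the closed range $0 \le \lambda \le 1$. At $\lambda = 0$ the kernel degenerates to $k(t) = \frac{1}{2}t^2$ on $[0,\frac{1}{2}]$ and $k(t) = \frac{1}{2}(1-t)^2$ on $[\frac{1}{2},1]$, and the auxiliary integrals used in the proof of Theorem \ref{teo 2.1} (the splitting $\int_0^\lambda + \int_\lambda^{1/2}$ collapsing to $\int_0^{1/2}$, and similarly near $t=1$) remain finite because the weight $|k(t)|$ vanishes quadratically at the endpoints, so the products $|k(t)|/t$ and $|k(t)|/(1-t)$ stay integrable after the $Q(I)$ bound is applied. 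Hence every step of the argument carries over unchanged and the substitution is fully justified, which completes the proof.
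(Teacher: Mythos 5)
Your proposal is correct and follows exactly the route the paper intends: Proposition \ref{prop 3.5} is obtained by direct substitution of $\lambda=0$ into the first branch of (\ref{2.1}), and your evaluations $\frac{\lambda^3}{3}+\frac{1-3\lambda}{24}\to\frac{1}{24}$, $\lambda^2-\frac{4\lambda-1}{8}\to\frac{1}{8}$, and $\ln\left(2(1-\lambda)^2\right)^{1-\lambda}+\frac{20\lambda-8\lambda^2-5}{8}\to\ln 2-\frac{5}{8}$ all check out. Your added remark on the integrability of $|k(t)|/t$ and $|k(t)|/(1-t)$ at the endpoints is a sensible (and correct) extra precaution that the paper does not bother to state.
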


\begin{proposition}
\label{prop 3.6} If we choose $\lambda =1$ in Theorem \ref{teo 2.1}, we
obtain the following inequality:%
\begin{eqnarray*}
\left\vert \frac{1}{b-a}\int_{a}^{b}f(x)dx-\frac{f(a)+f(b)}{2}\right\vert
&\leq &\frac{\left( b-a\right) ^{2}}{2}\left( \frac{1}{12}\right) ^{1-\frac{1%
}{q}}\left\{ \left( \frac{3}{8}\left\vert f^{\prime \prime }(a)\right\vert
^{q}+\frac{1}{8}\left\vert f^{\prime \prime }(b)\right\vert ^{q}\right) ^{%
\frac{1}{q}}\right. \\
&&\left. +\left( \frac{1}{8}\left\vert f^{\prime \prime }(a)\right\vert ^{q}+%
\frac{3}{8}\left\vert f^{\prime \prime }(b)\right\vert ^{q}\right) ^{\frac{1%
}{q}}\right\} .
\end{eqnarray*}
\end{proposition}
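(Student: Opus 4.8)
The plan is to specialize Theorem~\ref{teo 2.1} to the value $\lambda = 1$. Since $1 \in \left[ \frac{1}{2}, 1 \right]$, the bound that applies is the second branch of inequality~(\ref{2.1}), so the entire task reduces to substituting $\lambda = 1$ there and simplifying the constants; I expect no genuine obstacle beyond this bookkeeping.

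First I would treat the left-hand side. Setting $\lambda = 1$ makes the term $\left( \lambda - 1 \right) f\left( \frac{a+b}{2} \right)$ vanish and turns $\lambda \frac{f(a)+f(b)}{2}$ into $\frac{f(a)+f(b)}{2}$, so the quantity inside the absolute value on the left of~(\ref{2.1}) becomes $\frac{1}{b-a}\int_{a}^{b} f(x)\,dx - \frac{f(a)+f(b)}{2}$, which is exactly the left-hand side appearing in the statement.

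Next I would evaluate the three constants in the second branch at $\lambda = 1$: the power-mean prefactor $\left( \frac{3\lambda - 1}{24} \right)^{1 - \frac{1}{q}}$ becomes $\left( \frac{1}{12} \right)^{1 - \frac{1}{q}}$; the coefficient $\frac{4\lambda - 1}{8}$ becomes $\frac{3}{8}$; and the coefficient $\frac{5 - 4\lambda}{8} - \left( \lambda - 1 \right) \ln \frac{1}{2}$ becomes $\frac{1}{8}$, the logarithmic contribution vanishing because it carries the factor $\lambda - 1 = 0$. Substituting these three values, together with the reduced left-hand side, into the second inequality of~(\ref{2.1}) yields precisely the claimed estimate.

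The one point that merits a moment's attention is that $\lambda = 1$ is an endpoint of $\left[ \frac{1}{2}, 1 \right]$, so I would double-check that the computations~(\ref{2.6})--(\ref{2.8}) that produced this branch in the proof of Theorem~\ref{teo 2.1} stay valid there. They do: on $\left[ 0, \frac{1}{2}\right]$ one has $t - \lambda \le 0$ and on $\left[ \frac{1}{2}, 1\right]$ one has $1 - \lambda - t \le 0$ for every $\lambda \ge \frac{1}{2}$, so no sign change or splitting degenerates at $\lambda = 1$, and the integrals, whose denominators $t$ and $1-t$ stay bounded away from $0$ on the respective subintervals, are evaluated exactly as before. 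Hence the substitution is legitimate and the proposition follows.
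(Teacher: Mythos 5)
Your proposal is correct and is exactly the paper's (implicit) argument: the proposition is obtained by direct substitution of $\lambda=1$ into the second branch of Theorem~\ref{teo 2.1}, and all three constants evaluate as you compute. Your extra check that the endpoint $\lambda=1$ causes no degeneracy in (\ref{2.6})--(\ref{2.8}) is a sensible precaution the paper does not bother to record.
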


\begin{proposition}
\label{prop 3.7} If we choose $\lambda =\frac{1}{3}$ in Theorem \ref{teo 2.1}%
, we obtain the following inequality:%
\begin{eqnarray*}
&&\left\vert \frac{1}{6}\left[ f(a)+4f\left( \frac{a+b}{2}\right) +f(b)%
\right] -\frac{1}{b-a}\int_{a}^{b}f(x)dx\right\vert \\
&\leq &\frac{\left( b-a\right) ^{2}}{2}\left( \frac{1}{81}\right) ^{1-\frac{1%
}{q}}\left\{ \left( \frac{5}{72}\left\vert f^{\prime \prime }(a)\right\vert
^{q}+\left( \frac{2}{3}\ln \frac{8}{9}+\frac{7}{72}\right) \left\vert
f^{\prime \prime }(b)\right\vert ^{q}\right) ^{\frac{1}{q}}\right. \\
&&\left. \left( \left( \frac{2}{3}\ln \frac{8}{9}+\frac{7}{72}\right)
\left\vert f^{\prime \prime }(a)\right\vert ^{q}+\frac{5}{72}\left\vert
f^{\prime \prime }(b)\right\vert ^{q}\right) ^{\frac{1}{q}}\right\} .
\end{eqnarray*}
\end{proposition}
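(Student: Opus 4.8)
The plan is to obtain Proposition \ref{prop 3.7} simply by specializing Theorem \ref{teo 2.1} to $\lambda=\frac13$. Since $\frac13\le\frac12$, the relevant estimate is the \emph{first} branch of inequality (\ref{2.1}), so the entire task reduces to evaluating each $\lambda$-dependent coefficient in that branch at $\lambda=\frac13$ and tidying up.

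First I would rewrite the left-hand side. With $\lambda=\frac13$ we have $\lambda-1=-\frac23$ and $\lambda\frac{f(a)+f(b)}{2}=\frac{f(a)+f(b)}{6}$, hence
$(\lambda-1)f\!\left(\frac{a+b}{2}\right)-\lambda\frac{f(a)+f(b)}{2}+\frac{1}{b-a}\int_a^b f(x)\,dx=\frac{1}{b-a}\int_a^b f(x)\,dx-\frac16\left[f(a)+4f\!\left(\frac{a+b}{2}\right)+f(b)\right]$,
whose absolute value is exactly the quantity appearing on the left of the claimed inequality.

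Next I would evaluate the three constants occurring in the first branch of (\ref{2.1}). The power-mean prefactor $\frac{\lambda^3}{3}+\frac{1-3\lambda}{24}$ becomes $\frac{1}{81}+0=\frac{1}{81}$, giving the factor $\left(\frac{1}{81}\right)^{1-\frac1q}$. The coefficient $\lambda^2-\frac{4\lambda-1}{8}$ becomes $\frac19-\frac{1}{24}=\frac{5}{72}$. For the remaining coefficient $\ln\!\left(2(1-\lambda)^2\right)^{1-\lambda}+\frac{20\lambda-8\lambda^2-5}{8}$ I would first pull the exponent out of the logarithm, $\ln\!\left(2(1-\lambda)^2\right)^{1-\lambda}=(1-\lambda)\ln\!\left(2(1-\lambda)^2\right)$, which at $\lambda=\frac13$ equals $\frac23\ln\frac89$; together with $\frac{20\lambda-8\lambda^2-5}{8}=\frac{7/9}{8}=\frac{7}{72}$, this coefficient becomes $\frac23\ln\frac89+\frac{7}{72}$. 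Substituting these three evaluations into the first branch of (\ref{2.1}) produces the stated inequality.

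The work is entirely routine; the only points that need a little care are the common-denominator bookkeeping in $\frac19-\frac{(4/3)-1}{8}$ and in $\frac{20/3-8/9-5}{8}$, and the step of extracting the exponent $1-\lambda$ from the logarithm before substituting. No genuine obstacle arises, because all of the integral estimation has already been carried out in the proof of Theorem \ref{teo 2.1}.
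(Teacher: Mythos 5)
Your proposal is correct and is exactly the argument the paper intends: Proposition \ref{prop 3.7} is a direct specialization of the first branch of Theorem \ref{teo 2.1} at $\lambda=\tfrac13$, and your evaluations $\frac{\lambda^3}{3}+\frac{1-3\lambda}{24}=\frac{1}{81}$, $\lambda^2-\frac{4\lambda-1}{8}=\frac{5}{72}$, and $(1-\lambda)\ln\bigl(2(1-\lambda)^2\bigr)+\frac{20\lambda-8\lambda^2-5}{8}=\frac23\ln\frac89+\frac{7}{72}$ all check out, as does the rewriting of the left-hand side into Simpson form.
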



\begin{thebibliography}{9}
\bibitem{GL} E. K. Godunova and V. I. Levin, `Inequalities for functions of
a broad class that contains convex, monotone and some other forms of
functions' in: Numerical Mathematics and Mathematical Physics (Moskov. Gos.
Ped. Inst, Moscow, 1985), pp. 138--142, 166 (in Russian).

\bibitem{MK} M. S. Moslehian and M. Kian, Jensen type inequalities for $Q-$%
class functions, Bull. Aust. Math. Soc. 85 (2012), 128--142,
doi:10.1017/S0004972711002863.

\bibitem{s} M. Z. Sarikaya and N. Aktan, "On the generalization some
integral inequalities and their applications" Mathematical and Computer
Modelling, Volume 54, Issues 9-10, November 2011, Pages 2175-2182. 
\end{thebibliography}
\end{document}